\documentclass[11pt]{article}

\usepackage[T1]{fontenc}
\usepackage[utf8]{inputenc}
\usepackage[margin=2.54cm]{geometry}
\usepackage{microtype}
\usepackage{amsmath,amssymb,amsthm}
\numberwithin{equation}{section}
\allowdisplaybreaks
\usepackage{enumitem}
\usepackage{xcolor}
\usepackage{aliascnt}
\usepackage{sectsty}
\sectionfont{\centering}
\usepackage[backref=section]{hyperref}
\definecolor{mycitegreen}{HTML}{00A99D}
\hypersetup{
  colorlinks=true,
  linkcolor={blue!85!black},
  citecolor={red!85!black},
  urlcolor=mycitegreen  
}
\usepackage[noabbrev]{cleveref}

\theoremstyle{plain}
\newtheorem{theorem}{Theorem}[section]
\newaliascnt{lemma}{theorem}
\newtheorem{lemma}[lemma]{Lemma}
\aliascntresetthe{lemma}
\newaliascnt{corollary}{theorem}
\newtheorem{corollary}[corollary]{Corollary}
\aliascntresetthe{corollary}
\newaliascnt{conjecture}{theorem}
\newtheorem{conjecture}[conjecture]{Conjecture}
\aliascntresetthe{conjecture}
\newaliascnt{claim}{theorem}
\newtheorem{claim}[claim]{Claim}
\aliascntresetthe{claim}
\theoremstyle{definition}
\newaliascnt{question}{theorem}
\newtheorem{question}[question]{Question}
\aliascntresetthe{question}
\crefname{theorem}{Theorem}{Theorems}
\crefname{lemma}{Lemma}{Lemmas}
\crefname{corollary}{Corollary}{Corollaries}
\crefname{conjecture}{Conjecture}{Conjectures}
\crefname{question}{Question}{Questions}
\crefname{claim}{Claim}{Claims}
\crefname{section}{Section}{Sections}
\Crefname{section}{Section}{Sections}

\title{Nearly balanced spanning subdivisions in dense digraphs}
\author{
Zhilan Wang\thanks{School of Mathematics, Shandong University, Jinan 250100, China. Supported by the National Natural Science Foundation of China (No.~12571373).}
\and
Shuo Wei\footnotemark[1] \thanks{Corresponding author. Email: \href{mailto:shuowei@mail.sdu.edu.cn}{\texttt{shuowei@mail.sdu.edu.cn}}}
\and
Jin Yan\footnotemark[1]
}
\date{}

\begin{document}
\maketitle
\vspace{-1.2em}

\begin{abstract}
\noindent
Pavez-Sign\'e [\emph{Combin. Probab. Comput.} 33 (2024), 121--128]
conjectured a Dirac-type condition for spanning $H$-subdivisions and
asked whether the subdivision paths can additionally be required to
have similar lengths. Lee [\emph{European J. Combin.} 124 (2025),
104059] resolved the existence conjecture in the stronger setting of
digraphs. We answer the length-control question in this stronger directed setting:
for every $\varepsilon>0$, there exists a constant $C_0>0$ such that,
for every digraph $H$ with $h$ arcs and no isolated vertices, every $n$-vertex 
digraph $D$ with $n\ge C_0h$ and $\delta^0(D)\ge(1/2+\varepsilon)n$ contains
a spanning $H$-subdivision whose subdivision paths have lengths differing by at most one.
\end{abstract}

\par\medskip
\noindent\textbf{Keywords:} Spanning subdivisions; digraphs; minimum semi-degree; nearly balanced subdivisions.

\par\smallskip
\noindent\textbf{2020 Mathematics Subject Classification:} 05C20, 05C35, 05D40.

\section{Introduction}\label{sec:intro}

Dirac's theorem~\cite{dirac1952} states that every $n$-vertex graph with
minimum degree at least $n/2$ contains a Hamilton cycle. Its classical
digraph analogue is due to Ghouila-Houri~\cite{ghouila-houri}: every
$n$-vertex digraph with minimum semi-degree at least $n/2$ contains a
directed Hamilton cycle. These results initiated a broad theory of
Dirac-type conditions for spanning structures.

Given a digraph $H$, an \emph{$H$-subdivision} is obtained by replacing
each arc $uv\in A(H)$ with a directed $u$--$v$ path so that the
subdivision paths are internally vertex-disjoint and their internal
vertices avoid the vertices corresponding to $V(H)$. The latter
vertices are called \emph{branch vertices}. An $H$-subdivision in a digraph $D$ is \emph{spanning} if it contains every vertex of $D$. It is \emph{balanced} if all subdivision paths have the same length, and \emph{nearly balanced} if the lengths of
any two subdivision paths differ by at most one. In the undirected setting, length control in 
subdivisions has been studied extensively. Thomassen~\cite{thomassen1984} conjectured that sufficiently large
average degree forces a balanced subdivision of every fixed clique.
This conjecture was resolved by Liu and Montgomery~\cite{liu-montgomery}.
Subsequently, Luan, Tang, Wang and Yang~\cite{luan2023} and
Gil Fern\'andez, Hyde, Liu, Pikhurko and Wu~\cite{gilfernandez2023}
independently determined the correct quadratic order of the average
degree required to force a balanced clique subdivision.
More recently, Kim, Liu, Tang, Wang, Yang and
Yang~\cite{kim-length-2026} obtained a linear-in-$e(H)$ average-degree
bound for balanced subdivisions of arbitrary graphs $H$. These results,
however, do not concern spanning subdivisions.

 Pavez-Sign\'e~\cite{Pavez} initiated the study of spanning subdivisions under Dirac-type minimum degree conditions. He first proved a spanning subdivision theorem for $d$-regular graphs with $\log n\le d<n$, and then conjectured that the regularity assumption could be removed.
\begin{conjecture}[Pavez-Sign\'e, Conjecture~3.1 in \cite{Pavez}]
\label{conj}
For every $\varepsilon>0$, there exists a constant $C_0>0$ such that,
for every $C\ge C_0$ and every positive integer $h$, the following
holds. If $G$ is a graph on $n=Ch$ vertices with
$\delta(G)\ge(1+\varepsilon)n/2$, then $G$ contains a spanning
$H$-subdivision for every graph $H$ with $h$ edges and no isolated
vertices.
\end{conjecture}

The proof of Pavez-Sign\'e's result also yields a balanced subdivision
covering all but a linear proportion of the host vertices. This led him
to ask whether a similar length control can be achieved for a spanning
subdivision.

\begin{question}[Pavez-Sign\'e, Question~3.2 in \cite{Pavez}]
\label{ques:pavez}
In the setting of \cref{conj}, can all subdivision paths be
required to have similar lengths?
\end{question}

The existence problem in \cref{conj} was subsequently resolved by
Lee~\cite{Lee} in the stronger setting of digraphs. More precisely, for
every $\varepsilon>0$ there is a constant $C>0$ such that every
$n$-vertex digraph $D$ with $n\ge Ch$ and
$\delta^0(D)\ge(1/2+\varepsilon)n$ contains a spanning $H$-subdivision
for every $h$-arc digraph $H$ with no isolated vertices. More recently,
Wang, Cheng and Yan~\cite{wcy} determined the exact
minimum semi-degree threshold in this setting, proving that
$\delta^0(D)\ge(n+h)/2-1$ is sufficient when $n$ is sufficiently large
compared with $h$.  Neither result, however, controls the lengths of
the individual subdivision paths.

In a different direction, Pavez-Sign\'e, Lee and Petrov~\cite{plp} recently obtained spanning nearly balanced clique subdivisions in pseudorandom graphs. These results do not provide length control for spanning subdivisions of arbitrary digraphs.

We answer \cref{ques:pavez} in the stronger setting of digraphs, and
in fact obtain the strongest possible general form of balance.

\begin{theorem}\label{thm:main}
For every $\varepsilon>0$, there exists a constant $C_0>0$ such that the
following holds. Let $H$ be a digraph with $h\ge1$ arcs and no isolated
vertices, and let $D$ be an $n$-vertex digraph with $n\ge C_0h$ and
$\delta^0(D)\ge(1/2+\varepsilon)n$. Then $D$ contains a spanning
$H$-subdivision in which the lengths of any two subdivision paths differ
by at most one.
\end{theorem}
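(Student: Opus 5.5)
We would prove \cref{thm:main} by the absorption method, adapted so that every subdivision path has length $\ell$ or $\ell+1$, where $\ell=\lfloor (n-|V(H)|)/h\rfloor+1$ is forced by counting. Since $H$ has no isolated vertices, $|V(H)|\le 2h\le 2n/C_0$, so the branch vertices take up only a small proportion of $D$ and $\ell\ge C_0/2$ is large. Writing $n-|V(H)|=h(\ell-1)+r$ with $0\le r<h$, we need exactly $r$ paths of length $\ell+1$ and $h-r$ paths of length $\ell$. The plan is to embed the branch vertices, then build an almost spanning nearly balanced subdivision with flexible pieces reserved, and finally absorb the leftover vertices. The absorption must be arranged so that each path gains a prescribed number of vertices rather than an arbitrary number.

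\textbf{Step 1 (reservoir and absorbers).} Choose a random set $R$ of size $\gamma n$ with $\gamma\ll\varepsilon$. With high probability every vertex has at least $(1/2+\varepsilon/2)|R|$ in- and out-neighbours in $R$. Any two vertices $x,y$ with $xy$ not necessarily an arc then have many common ``connectors'' $x\to z\to y$ and many directed $x$--$y$ paths of any prescribed length $k\in[3,\gamma' n]$ inside $R$. This follows since $\delta^0>(1/2+\varepsilon)n$ gives, for every pair, at least $2\varepsilon n$ vertices $z$ with $xz,zy\in A(D)$, and chaining such connectors gives paths of every length.

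For absorbers we use the standard directed gadget: a vertex $v$ is absorbable on an arc $ab$ of a path if $av,vb\in A(D)$, and there are at least $2\varepsilon n$ such ``switch'' configurations per vertex. However, a single switch changes the length of a path by one, which would destroy balance. We therefore design \emph{length-preserving} absorbers. An absorber $A_v$ is a short path segment with two traversals on the same vertex set plus $v$ versus without $v$. The length offset is compensated by a paired absorber that can swallow a reservoir vertex and thereby adjust the count. In practice we aim for a simpler route. We fix in advance, for each of the $h$ paths, a flexible segment. Leftover vertices are absorbed into whichever path currently has the deficit, so lengths are balanced globally at the end.

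\textbf{Step 2 (almost spanning embedding).} We embed $V(H)$ injectively, for instance greedily into $V(D)\setminus R$. For each arc $uv\in A(H)$ we build a path of length about $\ell-\beta\ell$, using a path-cover or regularity/blow-up argument on $D-R$. Since $\delta^0(D-R)\ge(1/2+\varepsilon/2)|D-R|$, by Ghouila-Houri-type results (or the Kühn--Osthus--Treglown cycle theorems) $D-R$ contains cycles and paths of all lengths. More precisely, we find a spanning collection of $h$ vertex-disjoint paths of prescribed lengths: we cut a Hamilton cycle of a suitable auxiliary digraph into pieces of the right lengths, and join the pieces to branch vertices via connectors in $R$. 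This leaves a set $L$ of uncovered vertices with $|L|\ll\gamma n$.

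\textbf{Step 3 (absorb and rebalance).} We absorb the vertices of $L$ together with the unused part of $R$ into the paths using the switch structure, spreading them so that each path receives the exact number needed to reach length $\ell$ or $\ell+1$. The key counting requirement is that each path has many switch arcs for each vertex, about $\varepsilon\ell$ of them. This is true for a random-like path because a path of length $\ell\ge C_0/2$ contains about $\ell$ arcs, and for a typical vertex $v$ a positive fraction of them are absorbing arcs for $v$. We then solve a bipartite matching/flow problem from leftover vertices to path slots with prescribed capacities $c_P$ (the deficits), using Hall's condition. Small $\ell$ (bounded) is the delicate regime: when $h$ is linear in $n$, paths are short and individual paths have few switches. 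In that regime we instead cover with a spanning linear forest whose components are assigned to arcs via a perfect-matching-type argument, exploiting the fact that there are $\Theta(n)$ arcs.

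The main obstacle is this rebalancing step: guaranteeing exact capacities per path while keeping internal disjointness. This is especially hard when $\ell$ is constant, since randomness per path is then too weak. We would first try a two-scale scheme. We pre-reserve in each path a short ``flexible gadget'' that can realize lengths $k$ and $k+1$ on a fixed vertex set difference, and we absorb excess vertices into a global pool absorber (a Montgomery-style absorbing path structure) distributed uniformly across the $h$ paths.
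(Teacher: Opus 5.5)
There is a genuine gap, and it is exactly the step you identify as the ``main obstacle'': the rebalancing and absorption is never carried out, and the facts it would rest on do not hold for the paths you build.

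In Step 3 you need two things. First, every leftover vertex $v$ must have many arcs $ab$ with $av,vb\in A(D)$ on the paths that still have a deficit. Second, you need pairwise distinct such arcs that meet the prescribed capacity $c_P$ of every path exactly. Nothing in Step 2 (cutting a Hamilton cycle of an unspecified auxiliary digraph into pieces) makes the pieces ``random-like''. For a fixed path and a fixed vertex, the number of absorbing arcs can be zero. When $h=\Theta(n)$ the paths have bounded length, so no per-path concentration is possible at all.

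Besides your leftover set, you must also absorb the unused part of the reservoir, roughly $\gamma n$ vertices. So each path has to take in about $\gamma\ell$ vertices. With single-vertex switches each arc hosts at most one of them, and no Hall-type verification is given. The length-preserving absorbers, the flexible gadgets, the two-scale scheme and the linear-forest/perfect-matching argument for bounded $\ell$ are named but never constructed. As written, the proposal yields at most an almost-spanning structure.

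The missing idea is that no absorption is needed. The semi-degree surplus survives random restriction, so you can fix the vertex set of every subdivision path in advance, with exactly the right size. The paper does this as follows.
\begin{enumerate}[label=(\roman*)]
\item It reserves $h$ disjoint arcs $x_ey_e$ on a Hamilton cycle, together with branch vertices $b_v$.
\item It splits each path into a $b_u$--$x_e$ half and a $y_e$--$b_v$ half.
\item By recursive random bisection, using the hypergeometric bound and a union bound (\cref{partitionlemma}), it partitions the remaining vertices into $2h$ parts whose sizes differ by at most one. Every part, together with its two designated endpoints, still has semi-degree into the part at least $(1/2+\varepsilon/4)$ times the part's size.
\item \cref{directedHP} (Ghouila-Houri applied to a digraph in which the two endpoints are merged) then gives a Hamilton path with prescribed endpoints in each part.
\end{enumerate}
Every subdivision path therefore has exactly its prescribed length. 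Each bisection only requires the current set to have size above a constant $L(\varepsilon)$. So the bounded-$\ell$ regime that blocks your approach is handled automatically by taking $C_0$ large.
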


As an immediate consequence of \cref{thm:main}, we obtain the corresponding
result for graphs.

\begin{corollary}\label{cor}
For every $\varepsilon>0$, there exists a constant $C_0>0$ such that the
following holds. Let $H$ be a graph with $h\ge1$ edges and no isolated
vertices, and let $G$ be an $n$-vertex graph with $n\ge C_0h$ and
$\delta(G)\ge(1+\varepsilon)n/2$. Then $G$ contains a spanning
$H$-subdivision in which the lengths of any two subdivision paths differ
by at most one.
\end{corollary}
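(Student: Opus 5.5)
The statement to prove is \cref{cor}, which should follow from \cref{thm:main} by orienting the graphs. Given $G$ with $\delta(G)\ge(1+\varepsilon)n/2$, I would form the digraph $D$ on $V(G)$ by replacing each edge $xy$ with both arcs $xy$ and $yx$. Then $d^+_D(x)=d^-_D(x)=d_G(x)$ for every $x$, so $\delta^0(D)=\delta(G)\ge(1/2+\varepsilon/2)n$. Similarly, from $H$ I would form a digraph $\vec H$ by fixing an arbitrary orientation of each edge. Then $\vec H$ has exactly $h$ arcs, and it has no isolated vertices because $H$ has none.

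Next I would apply \cref{thm:main} with $\varepsilon/2$ in place of $\varepsilon$ and take $C_0$ to be the constant it returns. Since $n\ge C_0h$ and $\delta^0(D)\ge(1/2+\varepsilon/2)n$, the theorem gives a spanning $\vec H$-subdivision $S$ in $D$ whose subdivision paths have lengths differing by at most one.

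Finally I would translate $S$ back into $G$. Each arc of $D$ comes from an edge of $G$. So each directed $u$--$v$ path of $S$, with $uv\in A(\vec H)$, becomes an undirected $u$--$v$ path in $G$ of the same length, because a directed path never repeats a vertex. The theorem guarantees that these paths are internally vertex-disjoint and that their interiors avoid the branch vertices, and both properties carry over unchanged. The vertex set is also the same, so the result is spanning. Each edge of $H$ corresponds to exactly one arc of $\vec H$, so these paths together form a spanning $H$-subdivision of $G$ with lengths differing by at most one.

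The only point needing care is the constant: \cref{thm:main} must be invoked with $\varepsilon/2$, since $(1+\varepsilon)n/2=(1/2+\varepsilon/2)n$. Beyond that there is no real obstacle, because all the difficulty sits in \cref{thm:main} itself.
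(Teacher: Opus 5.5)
Your proposal is correct and matches the paper's proof. The paper likewise orients $H$ arbitrarily, replaces each edge of $G$ by two opposite arcs, and applies \cref{thm:main} with $\varepsilon/2$. Your step of translating the directed subdivision back to $G$ is left implicit in the paper.
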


%\begin{proof}
%Replace every edge of $G$ by the two oppositely directed arcs and orient
%the edges of $H$ arbitrarily. Apply \cref{thm:main} with
%$\varepsilon/2$ in place of $\varepsilon$, and then forget the
%orientations.
%\end{proof}

In particular, \cref{cor} gives an affirmative answer to
\cref{ques:pavez}. Moreover, the balance in \cref{thm:main} and
\cref{cor} is best possible in general. If $t=|V(H)|$ and the
subdivision paths have lengths $\ell_1,\ldots,\ell_h$, then every
spanning $H$-subdivision satisfies
$\sum_{i=1}^h\ell_i=n-t+h$, which need not be divisible by $h$.
Thus the subdivision paths cannot in general all have the same length.

The coefficient $1/2$ in the minimum semi-degree condition of
\cref{thm:main} is also asymptotically best possible. To see this, let
$H$ be the directed $2$-cycle and let $D$ be the disjoint union of two
complete symmetric digraphs of orders $\lfloor n/2\rfloor$ and
$\lceil n/2\rceil$. Then
$\delta^0(D)=\lfloor n/2\rfloor-1$, while $D$ contains no spanning
$H$-subdivision, since such a subdivision would form a directed
Hamilton cycle.

The proof of \cref{thm:main} revisits the probabilistic partitioning
philosophy used by Pavez-Sign\'e.  We reserve one vertex-disjoint arc
for each arc of $H$, split each subdivision path
into two nearly equal parts, and partition the remaining vertices
accordingly while preserving minimum semi-degree above one half in each
part. Hamilton paths with prescribed endpoints then complete the required
spanning $H$-subdivision.

\medskip
\noindent\textbf{Organization of the paper.}
In \cref{sec2}, we introduce the notation and auxiliary tools. In
\cref{sec3}, we prove the equitable partition lemma, \cref{thm:main},
and \cref{cor}. We conclude with a brief discussion in \cref{sec4}. 

\section{Preliminaries}\label{sec2}
\subsection{Notation}\label{subsec21}

For notation not defined in this paper, we refer the reader to \cite{Joergen}. 
Throughout the paper, all digraphs are finite and loopless. For a digraph $D$, we
write $V(D)$ and $A(D)$ for its vertex set and arc set, respectively.
For $v\in V(D)$, let $N_D^+(v)$ and $N_D^-(v)$ denote the out- and
in-neighborhoods of $v$, and let $d_D^+(v)$ and $d_D^-(v)$ denote the
corresponding degrees. For $X\subseteq V(D)$, write
$N_D^\pm(v,X)=N_D^\pm(v)\cap X$ and
$d_D^\pm(v,X)=|N_D^\pm(v,X)|$. We omit the subscript $D$ when the
ambient digraph is clear.

We write
$\delta^+(D)=\min_{v\in V(D)}d_D^+(v)$ and
$\delta^-(D)=\min_{v\in V(D)}d_D^-(v)$, and define the minimum
semi-degree of $D$ by
$\delta^0(D)=\min\{\delta^+(D),\delta^-(D)\}$.
For $X\subseteq V(D)$, let $D[X]$ denote the subdigraph of $D$ induced
by $X$. When $\sigma\in\{+,-\}$, the notation $N_D^\sigma$ and
$d_D^\sigma$ has the obvious meaning. The order of a directed path is
its number of vertices, and its length is its number of arcs.

\subsection{Auxiliary tools}\label{subsec22}

We shall use the standard concentration bound for the hypergeometric
distribution.
\begin{lemma}[{\cite[Theorem~2.10]{jlr}}]\label{prob}
Let $X$ be a hypergeometric random variable with sample size $m$ and
mean $\mu$. Then, for every $a>0$,
$$
\mathbb P(|X-\mu|\ge a)\le 2\exp\left(-\frac{2a^2}{m}\right).
$$
\end{lemma}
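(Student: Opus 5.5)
The plan is to reduce \cref{prob} to the Hoeffding bound for sums of independent bounded variables, using Hoeffding's classical comparison between sampling without and with replacement. Let $X$ count the marked elements in a uniformly random $m$-subset of an $N$-element population containing $K$ marked elements, so that $\mu=mK/N$. Let $Y\sim\mathrm{Bin}(m,K/N)$ count the marked elements in $m$ independent uniform draws \emph{with} replacement, so $\mathbb E Y=\mu$. The key step is to show that
\[
\mathbb E\, f(X)\le \mathbb E\, f(Y)\qquad\text{for every convex } f\colon\mathbb R\to\mathbb R ,
\]
and in particular $\mathbb E e^{tX}\le \mathbb E e^{tY}$ for all $t\in\mathbb R$.

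To prove the comparison, I would follow Hoeffding's argument. Write the with-replacement sample as a sequence of $m$ draws and let $Z$ be the set of distinct elements drawn. Conditioned on $Z$, the quantity $Y$ is a sum of the indicator values of the elements of $Z$, weighted by their multiplicities. One then shows that, conditioned on $|Z|$, the vector of indicator values of a without-replacement sample of size $m$ can be coupled so that $X=\mathbb E[\,\cdot\,]$ of a suitable average of $Y$-type sums. Concretely, one exhibits $X$ as a conditional expectation $X=\mathbb E[Y'\mid \mathcal G]$, where $Y'$ has the law of $Y$ and $\mathcal G$ is some $\sigma$-algebra. Conditional Jensen then gives the convex-order inequality. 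An alternative I would keep in reserve is to verify directly that the sequence of sampled indicators without replacement is negatively associated. That gives $\mathbb E e^{tX}\le\prod_{i}\mathbb E e^{tI_i}=\mathbb E e^{tY}$ via the standard product inequality for negatively associated variables applied to the nondecreasing functions $e^{tI_i}$ (with $t\ge0$, and similarly for $t<0$ by flipping signs).

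Given the moment generating function comparison, the rest is routine. By Hoeffding's lemma, each $[0,1]$-valued summand $I$ of $Y$ satisfies $\mathbb E e^{t(I-\mathbb EI)}\le e^{t^2/8}$. Hence $\mathbb E e^{t(X-\mu)}\le e^{mt^2/8}$. By Markov's inequality,
\[
\mathbb P(X-\mu\ge a)\le e^{-ta+mt^2/8},
\]
and choosing $t=4a/m$ gives the bound $e^{-2a^2/m}$. The lower tail follows symmetrically, by applying the same argument to $m-X$, which is again hypergeometric. A union bound over the two tails yields the factor $2$. I expect the main obstacle to be the convex-order/negative-association step. The remainder is the standard Chernoff--Hoeffding computation. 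Since the paper cites \cite[Theorem~2.10]{jlr}, in practice one may simply invoke it.
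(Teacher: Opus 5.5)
The paper gives no proof of \cref{prob}; it simply imports it from \cite[Theorem~2.10]{jlr}. Your proposal is a correct, self-contained proof along the classical route to that result (Hoeffding, 1963). You prove the convex-order comparison $\mathbb{E}f(X)\le\mathbb{E}f(Y)$ with $Y\sim\mathrm{Bin}(m,K/N)$, then apply Hoeffding's lemma and Markov with $t=4a/m$, and get the lower tail from the hypergeometric variable $m-X$. The computations are right and give exactly the stated form, with the sample size $m$ (not the mean) in the denominator.

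The one step you leave vague is the coupling, and it is worth writing out because the obvious coupling does not work. Let $(U_1,\dots,U_m)$ be an ordered sample without replacement and $I_j$ the indicator that $U_j$ is marked, so $X=\sum_jI_j$. Independently, let $n_1,\dots,n_k$ be the multiplicities (in order of first appearance) of the distinct values among $m$ draws with replacement, and set $n_j=0$ for $k<j\le m$. Let $\pi$ be an independent uniform permutation of $\{1,\dots,m\}$.

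Then $Y'=\sum_{j=1}^m n_{\pi(j)}I_j$ has the law of $Y$. Indeed, conditionally on $(n_i)$ and $\pi$, the elements $U_{\pi^{-1}(1)},\dots,U_{\pi^{-1}(k)}$ form a uniform injective $k$-sequence, just as the distinct values of a with-replacement sample do. Moreover $\mathbb{E}[Y'\mid I_1,\dots,I_m]=\sum_jI_j\,\mathbb{E}n_{\pi(j)}=X$, because $\sum_in_i=m$. Conditional Jensen then gives the convex order.

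The random permutation $\pi$ is the real content here. Without it the weights $\mathbb{E}n_j$ are not all $1$; for example, $\mathbb{E}n_1=1+(m-1)/N$.

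Your fallback via negative association is also valid. However, negative association of sampling without replacement is itself a theorem (Joag-Dev and Proschan), not something to verify by hand.

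As for what each route buys: the citation costs one line. Your argument makes the lemma self-contained and shows where the constant $2$ in the exponent comes from.
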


We also use the following theorem of Ghouila-Houri~\cite{ghouila-houri}.

\begin{theorem}[Ghouila-Houri]\label{directedDirac}
Every $m$-vertex digraph $D$ with $\delta^0(D)\ge m/2$ contains a
directed Hamilton cycle.
\end{theorem}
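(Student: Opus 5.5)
We follow the classical longest-cycle argument, using only the semi-degree condition. Throughout, let $D$ be an $m$-vertex digraph with $\delta^0(D)\ge m/2$.

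\textbf{Step 1: $D$ is strongly connected.} Let $u\ne v$ and suppose $uv\notin A(D)$. Then $N^+(u)$ and $N^-(v)$ both lie in $V(D)\setminus\{u,v\}$, a set of $m-2$ vertices. Each has size at least $m/2$, so they intersect, giving a $u$--$v$ path of length two.

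\textbf{Step 2: a long cycle exists.} Take a longest path $x_0\dots x_s$. By maximality, $N^+(x_s)\subseteq\{x_0,\dots,x_{s-1}\}$. Let $x_j$ be the out-neighbour of $x_s$ with smallest index. Then $x_j\dots x_s x_j$ is a cycle containing all of $N^+(x_s)\cup\{x_s\}$, so it has length at least $m/2+1$. Now let $C=c_0c_1\dots c_{k-1}c_0$ be a longest cycle, indices taken mod $k$. We have $k\ge m/2+1$, and we suppose for contradiction that $k<m$.

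\textbf{Step 3: choose a path outside $C$ and set up the count.} Let $P=y_0\dots y_t$ be a longest path in $D-V(C)$. By maximality of $P$:
\begin{itemize}
\item every in-neighbour of $y_0$ lies in $V(P)\cup V(C)$, so $a:=|N^-(y_0)\cap V(C)|\ge m/2-t$;
\item every out-neighbour of $y_t$ lies in $V(P)\cup V(C)$, so $b:=|N^+(y_t)\cap V(C)|\ge m/2-t$.
\end{itemize}
The key observation is the following. If $c_i\to y_0$ and $y_t\to c_j$, then replacing the segment $c_{i+1}\dots c_{j-1}$ of $C$ by $P$ gives a cycle of length $k-(j-i-1)+(t+1)$. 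Maximality of $C$ therefore forces $j-i\ge t+2$, measured cyclically forward from $i$. So whenever $c_i\in N^-(y_0)$, none of $c_{i+1},\dots,c_{i+t+1}$ lies in $N^+(y_t)$.

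\textbf{Step 4: the main counting step.} This is where the work lies.

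\emph{Case $a,b\ge1$.} List $N^-(y_0)\cap V(C)$ cyclically as $c_{i_1},\dots,c_{i_a}$. Each gap between consecutive elements is either at least $t+2$, or contains no vertex of $N^+(y_t)$ beyond position $t+1$. A cleaner way to organise this is to note that the $a$ shifted sets
\[
\{c_{i+1},\dots,c_{i+t+1}\},\qquad c_i\in N^-(y_0),
\]
all avoid $N^+(y_t)$, and their union has size at least $\min(k,\,a+t)$. Combined with $b\ge1$, this gives
\[
k\ \ge\ b+a+t\ \ge\ m-t.
\]
Together with $|V(P)|=t+1\le m-k$, this yields $k\le m-t-1$, which is a contradiction.

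\emph{Case $t\ge m/2$.} Here the bounds on $a$ and $b$ are vacuous. But then $P$ together with $C$ contains more than $m$ vertices, since $k+t+1>m/2+1+m/2$. This is impossible.

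\emph{Case $t<m/2$.} Then $a,b\ge1$ automatically, so the first case applies.

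The main obstacle is making the union bound in Step 4 rigorous: the shifted windows overlap, and one must confirm that their union has size at least $a+t$ rather than merely $a$. We plan to handle this by looking at the maximal runs of consecutive windows, each run ending at least $t+1$ positions past its last in-neighbour of $y_0$. If this runs into trouble, we will instead use the standard alternative: apply the bound to a single vertex $y$ outside $C$ (the case $t=0$), for which no $c_i\to y\to c_{i+1}$ gives $a+b\le k$ directly and hence $k\ge m$. We would then use strong connectivity from Step 1 to reduce the general case to this one.
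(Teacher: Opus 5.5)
The paper gives no proof of this statement: it cites Ghouila-Houri's theorem as a black box. Your proposal is a correct, self-contained proof by a different route, so the comparison is between a citation and your argument. The original theorem is stronger: a strongly connected digraph with $d^+(v)+d^-(v)\ge m$ for every $v$ is Hamiltonian, and its proof is noticeably more delicate. Citing it is shorter and gives that stronger result. Your longest-cycle argument is elementary but proves only the semi-degree corollary. That corollary is all the paper actually uses: it needs the Hamilton cycle in $D$, and Hamiltonicity of the auxiliary digraph in \cref{directedHP}.

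The step you flagged as the main obstacle closes exactly as you planned. Suppose the windows $\{c_{i+1},\ldots,c_{i+t+1}\}$ do not cover $C$. Cut the cycle at an uncovered position. The windows become intervals of common length $t+1$ with distinct left endpoints $\ell_1<\cdots<\ell_a$. Their union therefore has size $(t+1)+\sum_{s<a}\min(t+1,\ell_{s+1}-\ell_s)\ge a+t$. Since $b\ge1$, the windows cannot cover all of $C$. Hence $k\ge a+b+t\ge m-t>m-t-1\ge k$, a contradiction.

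Some minor remarks:
\begin{itemize}
\item Step~1 is never used.
\item The bound $k\ge m/2+1$ already forces $t+1\le m-k\le m/2-1$. So the case $t\ge m/2$ cannot occur, and $a,b\ge2$ automatically.
\item The sentence about each gap being ``at least $t+2$'' is garbled. Your ``cleaner'' window formulation is the right one.
\item Your fallback would not work as stated. A single vertex $y\notin V(C)$ may have in- and out-neighbours off $C$, so you only get $a+b\le k$, without $a,b\ge m/2$. Strong connectivity gives no evident reduction to that case. It is precisely the maximality of the path $P$ in $D-V(C)$ that forces the in-neighbours of $y_0$ and the out-neighbours of $y_t$ outside $P$ onto $C$.
\end{itemize}
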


The next consequence gives the Hamilton paths required later.
%We include the proof to keep the dependence on the degree surplus explicit.

\begin{lemma}\label{directedHP}
Let $G$ be an $m$-vertex digraph with $m\ge4$ and
$\delta^0(G)\ge(m+3)/2$. Then, for every two distinct vertices
$x,y\in V(G)$, the digraph $G$ contains a directed Hamilton path from
$x$ to $y$.
\end{lemma}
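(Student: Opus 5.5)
Since the statement is a Hamilton path with prescribed ends, I will reduce it to \cref{directedDirac} by contracting the pair $x,y$ into a single vertex. First, if $m$ is small relative to the condition the hypothesis is vacuous: $\delta^0(G)\le m-1$ forces $(m+3)/2\le m-1$, that is, $m\ge 5$. So we may assume $m\ge5$, and every vertex has at least $(m+3)/2$ out- and in-neighbours.

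Form a digraph $G'$ on $m-1$ vertices. Delete $x$ and $y$ and add a new vertex $z$. For $w\in V(G)\setminus\{x,y\}$, put an arc $zw$ whenever $yw\in A(G)$ and an arc $wz$ whenever $wx\in A(G)$. All arcs of $G-\{x,y\}$ are kept. A directed Hamilton cycle in $G'$ passes through $z$ as $\dots u\,z\,w\dots$ with $ux,yw\in A(G)$. Replacing $z$ by the detour gives the Hamilton path in $G$: start at $x$ (using the arc $ux$ reversed in traversal order) — more precisely, reading the cycle from $w$ around to $u$ gives a path $w\to\dots\to u$ covering $V(G)\setminus\{x,y\}$. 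Then $y\,w\dots u\,x$ is a Hamilton path from $y$ to $x$. To get a path from $x$ to $y$ instead, I use the opposite convention: $zw\in A(G')$ iff $xw\in A(G)$, and $wz\in A(G')$ iff $wy\in A(G)$. Then the cycle yields $x\,w\dots u\,y$, which is what we need.

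It remains to check $\delta^0(G')\ge (m-1)/2$, so that \cref{directedDirac} applies (and $m-1\ge 2$).

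For $w\ne z$, removing $x,y$ costs at most $2$ from each semi-degree, and adding $z$ can only help. Moreover, if $w$ had an arc to $y$, it now has an arc to $z$, so the out-degree loss is at most $1$ unless the only lost neighbour was $x$; either way, the loss is at most $2$. This gives $d^\pm_{G'}(w)\ge (m+3)/2-2=(m-1)/2$.

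For $z$, we have $d^+_{G'}(z)=|N^+_G(x)\setminus\{y\}|\ge (m+3)/2-1\ge (m-1)/2$, and similarly $d^-_{G'}(z)\ge(m-1)/2$.

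Hence \cref{directedDirac} gives the Hamilton cycle, and the path follows. The main point to verify carefully is the degree bookkeeping for ordinary vertices, where the loss of both $x$ and $y$ is exactly what the $+3$ in the hypothesis pays for. The slack even makes the $m\ge4$ assumption harmless.
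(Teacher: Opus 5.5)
Your final construction ($zw\in A(G')$ iff $xw\in A(G)$, and $wz\in A(G')$ iff $wy\in A(G)$), together with your degree bookkeeping and the application of \cref{directedDirac}, is exactly the paper's proof, so the proposal is correct and follows the same route. In a write-up, drop the false start with the reversed convention, and replace the muddled sentence in the degree count by the precise observation that each $w\neq z$ loses at most one from each semi-degree, because the lost arc $wy$ (resp.\ $xw$) is replaced by $wz$ (resp.\ $zw$).
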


\begin{proof}
Fix distinct $x,y\in V(G)$. Form a digraph $G'$ from
$G-\{x,y\}$ by adding a new vertex $z$. For every 
$w\in V(G)\setminus\{x,y\}$, add the arc $zw$ whenever $xw\in A(G)$
and the arc $wz$ whenever $wy\in A(G)$. Thus $G'$ has $m-1$ vertices.
Every vertex different from $z$ loses at most two in-neighbors and at
most two out-neighbors when passing from $G$ to $G'$, while
$d_{G'}^+(z)\ge d_G^+(x)-1$ and $d_{G'}^-(z)\ge d_G^-(y)-1$.
Consequently, $\delta^0(G')\ge(m-1)/2$. By
\cref{directedDirac}, $G'$ contains a directed Hamilton cycle.
Write the segment of this cycle through $z$ as $wzu$. Deleting $z$
leaves a directed Hamilton path from $u$ to $w$ in
$G-\{x,y\}$. By the definition of $G'$, the arcs $xu$ and $wy$ lie in
$G$, so adjoining them gives a directed Hamilton path from $x$ to $y$.
\end{proof}

\section{Proofs of Theorem \ref{thm:main} and Corollary \ref{cor}}\label{sec3}

We begin with a partition lemma that will be used in the proof of
\cref{thm:main}. It is a directed version of the recursive partitioning
argument in~\cite[Lemma~2.4]{Pavez}. For each part, we also prescribe
two vertices outside the partition whose in-degrees  and out-degrees into that
part are required to remain large. The prescribed vertices for
different parts need not be distinct.

\begin{lemma}\label{partitionlemma}
For every $\rho>0$, there exists an integer $L=L(\rho)$ such that the
following holds. Let $D$ be a digraph, let $R\subseteq V(D)$, and let
$I$ be a finite index set. For each $i\in I$, let
$p_i,q_i\in V(D)\setminus R$, where the vertices prescribed for
different indices need not be distinct. Suppose that, for every
$z\in R\cup\{p_i,q_i:i\in I\}$ and every $\sigma\in\{+,-\}$, we have
$d_D^\sigma(z,R)\ge(1/2+\rho)|R|$. Let $(r_i)_{i\in I}$ be positive
integers with $\sum_{i\in I}r_i=|R|$, $r_i\ge L$ for every $i$, and
$|r_i-r_j|\le1$ for all $i,j\in I$. Then $R$ has a partition
$R=\bigcup_{i\in I}R_i$ with $|R_i|=r_i$ such that, for every
$i\in I$, every $z\in R_i\cup\{p_i,q_i\}$ and every
$\sigma\in\{+,-\}$, $d_D^\sigma(z,R_i)\ge(1/2+\rho/2)r_i$.
\end{lemma}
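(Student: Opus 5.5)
We prove \cref{partitionlemma} by recursive random halving, following the strategy of \cite[Lemma~2.4]{Pavez}. The point of recursing is that a single uniformly random partition into $|I|$ parts fails. A part of size $r_i\ge L$ has degree deviations of order $\sqrt{r_i\log(\cdot)}$, while we must take a union bound over roughly $|R|$ vertices. That union bound costs $\log|R|$, which is unbounded compared with $r_i$. Halving instead keeps every union bound local: at each level the deviations are small relative to the current part sizes, and the errors summed over the levels form a convergent geometric-type series.

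\emph{Splitting step.} Let $S\subseteq R$ carry an index set $J\subseteq I$ with $\sum_{j\in J}r_j=|S|$. Suppose every $z\in S\cup\{p_j,q_j:j\in J\}$ satisfies $d^\sigma(z,S)\ge(1/2+\rho_k)|S|$. If $|J|\ge2$, split $J=J_1\cup J_2$ with $||J_1|-|J_2||\le1$. Since the $r_j$ differ by at most one, we can arrange that $s_t=\sum_{j\in J_t}r_j$ are nearly equal, with $s_1,s_2\ge|S|/3$, say. Choose $S_1\subseteq S$ uniformly of size $s_1$ and put $S_2=S\setminus S_1$.

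For each relevant vertex $z$ we have at most $|S|+2|J|\le3|S|$ such vertices. For each of them, $d^\sigma(z,S_t)$ is hypergeometric with mean at least $(1/2+\rho_k)s_t$. By \cref{prob} with $a=s_t^{2/3}$, the failure probability is at most $2\exp(-2s_t^{1/3}/3)$. A union bound over $\le 12|S|$ events succeeds once $|S|$ exceeds a constant $L_0$, giving a split with
\[
\rho_{k+1}=\rho_k-s_t^{-1/3}.
\]
Here we need $|S|\ge L$ large; this holds automatically because $|S|\ge r_j\ge L$. The prescribed vertices $p_j,q_j$ with $j\in J_t$ are among those controlled for $S_t$, so the hypothesis propagates to $(S_t,J_t)$.

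\emph{Recursion and error accounting.} Starting from $(R,I)$ with $\rho_0=\rho$, recurse until each part carries a single index. The only delicate point, which I expect to be the main obstacle, is the accumulated loss $\sum_k s^{-1/3}$ along a branch. The part sizes along any root-to-leaf path roughly halve at each level, and every part has size at least $r_i\ge L$. Reading the path from the leaf upward, the sizes are therefore at least $L,\,1.5L,\,(1.5)^2L,\dots$ (using $s_t\ge|S|/3$ and near-equality, since the sizes grow by a factor of at least $3/2$ going up). The total loss is then at most
\[
\sum_{m\ge0}\bigl((1.5)^mL\bigr)^{-1/3}=O(L^{-1/3}),
\]
which is at most $\rho/2$ for $L=L(\rho)$ large. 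Hence every leaf $(R_i,\{i\})$ satisfies $d^\sigma(z,R_i)\ge(1/2+\rho/2)r_i$ for all $z\in R_i\cup\{p_i,q_i\}$.

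One bookkeeping detail remains. The degree condition is relative to $|S|$, while the target bound is relative to $s_t$; since $s_t$ is exactly the size of $S_t$, these agree. Balancing the split needs care when $|J|$ is odd, but the sizes still lie within a factor of $2$ of half, so the geometric bound above persists.
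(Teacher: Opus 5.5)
Your proposal is correct and is essentially the paper's argument: recursive random halving of the index set, \cref{prob} applied to every vertex of $S$ and every prescribed vertex, a union bound at each split (where $|S|\ge 2L$), and a total loss that stays below $\rho/2$ because part sizes grow geometrically toward the root. The only difference is bookkeeping: you subtract an explicit loss $s_t^{-1/3}$ at each level and sum the resulting geometric series along each root-to-leaf path, whereas the paper builds the loss into the size-dependent invariant $d_D^\sigma(z,S)\ge(\theta-2|S|^{-1/4})|S|$, whose slack $2(s_j^{-1/4}-s^{-1/4})s_j=\Omega(s_j^{3/4})$ pays for each split automatically.
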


\begin{proof}
Set $\theta=1/2+\rho$. We choose $L$ sufficiently large in terms of
$\rho$. For a non-empty set $P\subseteq I$ and a set $S\subseteq R$
with $|S|=\sum_{i\in P}r_i$, call $(S,P)$ \emph{good} if
\begin{equation}\label{eq:partition-invariant}
 d_D^\sigma(z,S)\ge\bigl(\theta-2|S|^{-1/4}\bigr)|S|
\end{equation}
for every $z\in S\cup\{p_i,q_i:i\in P\}$ and every
$\sigma\in\{+,-\}$. The pair $(R,I)$ is good by assumption.

We first show that every good pair involving at least two indices can
be split into two smaller good pairs.

\begin{claim}\label{clm}
If $(S,P)$ is good and $|P|\ge2$, then there are non-empty sets
$P_1,P_2$ partitioning $P$ and sets $S_1,S_2$ partitioning $S$ such
that $|S_j|=\sum_{i\in P_j}r_i$ and $(S_j,P_j)$ is good for
$j\in\{1,2\}$.
\end{claim}

\begin{proof}\renewcommand*{\qedsymbol}{$\blacksquare$}
Write $s=|S|$. Partition $P$ into non-empty sets $P_1,P_2$ whose
sizes differ by at most one, and set
$s_j=\sum_{i\in P_j}r_i$. Since the $r_i$ differ pairwise by at most
one, by taking $L\ge4$ we may assume that
$s/4\le s_j\le3s/4$ for $j\in\{1,2\}$. Choose $S_1$ uniformly among
the $s_1$-subsets of $S$ and set $S_2=S\setminus S_1$.
orders
Fix $j\in\{1,2\}$, $\sigma\in\{+,-\}$ and
$z\in\{p_i,q_i:i\in J_j\}$. Then $d_D^\sigma(z,S_j)$ is
hypergeometric with mean at least
$(\theta-2s^{-1/4})s_j$ by \eqref{eq:partition-invariant}. The
difference between this lower bound and the target
$(\theta-2s_j^{-1/4})s_j$ is
$2(s_j^{-1/4}-s^{-1/4})s_j$. Since $s_j\le3s/4$, this difference is at
least $c s_j^{3/4}$ for an absolute constant $c>0$.

Now let $z\in S$. Conditional on $z\in S_j$, the set
$S_j\setminus\{z\}$ is a uniformly chosen $(s_j-1)$-subset of
$S\setminus\{z\}$. Hence $d_D^\sigma(z,S_j)$ is hypergeometric with
conditional mean $\frac{s_j-1}{s-1}d_D^\sigma(z,S)$. This differs from
$\frac{s_j}{s}d_D^\sigma(z,S)$ by at most one. Thus, after increasing
$L$ if necessary, the conditional mean exceeds
$(\theta-2s_j^{-1/4})s_j$ by at least $(c/2)s_j^{3/4}$.

By \cref{prob}, there exists a constant $c_1>0$ such that, for
each prescribed vertex, the required degree bound fails with
probability at most $2e^{-c_1s_j^{1/2}}$. For $z\in S$, the same
bound holds for the event that $z\in S_j$ and the required degree
bound fails.

There are at most $4s+4|P|\le8s$ such events altogether. Since
$s_j\ge s/4$, the union bound shows that the probability that either
$(S_1,J_1)$ or $(S_2,J_2)$ is not good is at most
$16s e^{-c_2s^{1/2}}$
for some constant $c_2>0$. Whenever a split is required, $s\ge2L$.
Thus, by choosing $L$ sufficiently large, this probability is less
than one, and hence a good split exists.
\end{proof}

Starting from $(R,I)$, repeatedly apply \cref{clm} until
every index set is a singleton. This gives a partition
$R=\bigcup_{i\in I}R_i$ with $|R_i|=r_i$. By
\eqref{eq:partition-invariant}, every
$z\in R_i\cup\{p_i,q_i\}$ and $\sigma\in\{+,-\}$ satisfies
$d_D^\sigma(z,R_i)\ge(\theta-2r_i^{-1/4})r_i$. Finally, increase $L$
so that $2L^{-1/4}\le\rho/2$. Since $\theta=1/2+\rho$, the required
bound follows.
\end{proof}

We are now in a position to prove \cref{thm:main}.

\begin{proof}[Proof of \cref{thm:main}]
We may assume that $0<\varepsilon<1/2$ and set
$\rho=\varepsilon/2$. Let $L=L(\rho)$ be given by
\cref{partitionlemma}, enlarged if necessary so that $\rho L\ge5$,
and choose $C_0\ge\max\{4,8/\varepsilon,2L+6\}$.

Let $H$ and $D$ satisfy the assumptions of the theorem, and write
$t=|V(H)|$. Since $H$ has no isolated vertices, $t\le2h$. Write
$A(H)=\{e_1,\ldots,e_h\}$. A spanning $H$-subdivision whose subdivision
paths have lengths $\ell_1,\ldots,\ell_h$ must satisfy
$\sum_{i=1}^h\ell_i=n-t+h$. Choose positive integers
$\ell_1,\ldots,\ell_h$ with this sum and with
$|\ell_i-\ell_j|\le1$ for all $i,j$. Since
$(n-t+h)/h\ge n/h-1\ge C_0-1$, every $\ell_i$ is at least $C_0-1$.

As $\delta^0(D)\ge n/2$, \cref{directedDirac} gives a directed
Hamilton cycle in $D$. Since $n\ge C_0h$ and $C_0\ge4$, this cycle
contains $h$ pairwise vertex-disjoint arcs. For each
$e=uv\in A(H)$, denote the corresponding arc by $x_ey_e$, where
$x_e$ is assigned to the tail $u$ and $y_e$ to the head $v$. Let
$M=\{x_ey_e:e\in A(H)\}$. Since $t\le2h$, we may choose distinct vertices
$b_v\in V(D)$, $v\in V(H)$, outside $V(M)$ to serve as the branch
vertices corresponding to the vertices of $H$.

For every arc $e=uv$, choose positive integers $a_e^-$ and $a_e^+$
with $a_e^-+a_e^++1=\ell_e$ and $|a_e^--a_e^+|\le1$. Set
$r_e^-=a_e^--1$ and $r_e^+=a_e^+-1$. Because the $\ell_e$ take two
consecutive values and $\ell_e-1$ is split as evenly as possible, the
$2h$ integers $r_e^-,r_e^+$ differ pairwise by at most one. The choice
$C_0\ge2L+6$ also gives $r_e^-,r_e^+\ge L$ for every $e\in A(H)$.

Let $B=\{b_v:v\in V(H)\}$ and set $R=V(D)\setminus(B\cup V(M))$. Then $|R|=n-t-2h$, while
$$\sum_{e\in A(H)}(r_e^-+r_e^+) =\sum_{e\in A(H)}(a_e^-+a_e^+-2)=\sum_{e\in A(H)}(\ell_e-3) =n-t-2h =|R|.$$
Thus the numbers $r_e^-,r_e^+$ prescribe a partition of all vertices of
$R$.

For every $z\in V(D)$ and every $\sigma\in\{+,-\}$, using
$|B|+|V(M)|=t+2h\le4h$, we have
\begin{align*}
d_D^\sigma(z,R)
 &\ge \delta^0(D)-|B|-|V(M)|\\
 &\ge \left(\frac12+\varepsilon\right)n-4h
 \ge \left(\frac12+\frac\varepsilon2\right)n
 \ge \left(\frac12+\rho\right)|R|.
\end{align*}
Here the third inequality follows from $n\ge C_0h$ and
$C_0\ge8/\varepsilon$.

Apply \cref{partitionlemma} with index set
$I=\{(e,-),(e,+):e\in A(H)\}$. For each arc $e=uv$, assign
$b_u,x_e$ to $(e,-)$ with prescribed size $r_e^-$, and $y_e,b_v$ to
$(e,+)$ with prescribed size $r_e^+$. We obtain a partition
$R=\bigcup_{e\in A(H)}(R_e^-\cup R_e^+)$, where
$|R_e^-|=r_e^-$ and $|R_e^+|=r_e^+$, such that, for every
$\sigma\in\{+,-\}$,
$d_D^\sigma(z,R_e^-)\ge(1/2+\rho/2)r_e^-$ for every
$z\in R_e^-\cup\{b_u,x_e\}$, and
$d_D^\sigma(z,R_e^+)\ge(1/2+\rho/2)r_e^+$ for every
$z\in R_e^+\cup\{y_e,b_v\}$.

Fix an arc $e=uv\in A(H)$ and set
$W_e^-=R_e^-\cup\{b_u,x_e\}$. Writing $r=r_e^-$, we have
$|W_e^-|=r+2$ and
$$
\delta^0(D[W_e^-])
\ge\left(\frac12+\frac{\rho}{2}\right)r
\ge\frac{r+5}{2}
=\frac{|W_e^-|+3}{2},
$$
where the second inequality follows from $r\ge L$ and $\rho L\ge5$.
By \cref{directedHP}, $D[W_e^-]$ contains a directed
Hamilton path $P_e^-$ from $b_u$ to $x_e$. Its length is
$r_e^-+1=a_e^-$. Similarly,
$D[R_e^+\cup\{y_e,b_v\}]$ contains a directed Hamilton path $P_e^+$
from $y_e$ to $b_v$ of length $r_e^++1=a_e^+$.

Concatenating $P_e^-$, the arc $x_ey_e$, and $P_e^+$ gives a directed
$b_u$--$b_v$ path of length $\ell_e$. Doing this for every
$e\in A(H)$ gives an $H$-subdivision. Indeed, the sets
$R_e^\pm$, $e\in A(H)$, are pairwise disjoint, the arcs of $M$ are
pairwise vertex-disjoint, and $R\cup V(M)$ is disjoint from $B$.
Hence the resulting paths are internally vertex-disjoint and intersect
only at common branch vertices. Since
$V(D)=B\mathbin{\dot\cup}V(M)\mathbin{\dot\cup}R$, the subdivision is
spanning. Finally, the choice of the $\ell_e$ implies that its
subdivision path lengths differ by at most one.
\end{proof}

We finish this section with the proof of \cref{cor}.

\begin{proof}[Proof of \cref{cor}]
Orient the edges of $H$ arbitrarily and replace every edge of $G$ by
the two oppositely directed arcs. The resulting digraph has minimum
semi-degree $\delta(G)$. The result now follows from \cref{thm:main},
applied with $\varepsilon/2$ in place of $\varepsilon$.
\end{proof}

\section{Concluding remarks}\label{sec4}

Our result gives a nearly balanced strengthening of the spanning
subdivision theorem in the Dirac-type setting and, in particular,
answers Question~3.2 of Pavez-Sign\'e \cite{Pavez} in the stronger setting of
digraphs. It is natural to ask whether the $\varepsilon n$ surplus 
in the minimum semi-degree condition can be replaced by the optimal 
additive term. In view of the exact spanning subdivision theorem
of Wang, Cheng and Yan~\cite{wcy},the following question seems 
particularly natural.

\begin{question}\label{ques}
Does there exist a constant $C_0>0$ such that, for every digraph $H$
with $h$ arcs and no isolated vertices and every $n$-vertex digraph
$D$ with $n\ge C_0h$ and $\delta^0(D)\ge(n+h)/2-1$, the digraph $D$ contains a spanning
$H$-subdivision whose subdivision paths have lengths differing by at most one?
\end{question}

\noindent\textbf{\large Acknowledgment.}
The authors used ChatGPT 5.6 to assist in the development of the
probabilistic partition argument in \cref{partitionlemma}, as well as
for some language polishing. All mathematical arguments were
independently verified by the authors, who take full responsibility
for the content of this work.

\end{document}